\newcommand{\pfr}{\hyperref[th:pfr.intro]{PFR}}
\newcommand{\pfrp}{\hyperref[th:pfrp.intro]{PFR$_{\perp}$}}
\newcommand{\clr}{\hyperref[th:clr]{CLR}}
\newlength\zig@L
\newlength\zig@La
\newlength\zig@Lb
\newcommand{\xzigrightarrow}[2][]{%
  \mathrel{%
    \settowidth{\zig@La}{$\scriptstyle #2$}%
    \settowidth{\zig@Lb}{$\scriptstyle #1$}%
    \zig@L=\zig@La\relax
    \ifdim\zig@Lb>\zig@L \zig@L=\zig@Lb\fi
    \advance\zig@L by 2.2em\relax
    \tikz[baseline=-0.65ex]{%
      \draw[->,
            line cap=round,
            decorate,
            decoration={zigzag,segment length=4pt,amplitude=1.1pt}]%
        (0,0) -- (\zig@L,0)
        node[midway,above=2pt] {$\scriptstyle #2$}%
        \if\relax\detokenize{#1}\relax\else
          node[midway,below=2pt] {$\scriptstyle #1$}%
        \fi
      ;
    }%
  }%
}
\newcommand{\squigjoin}{1mu} 
\def\sqleft@{\sim}                    
\def\sqmid@{\sim\mkern-\squigjoin}    
\def\rightsquigarrowfill@{%
  \arrowfill@{\sqleft@}{\sqmid@}{\mkern-4mu\succ}%
}
\newcommand{\xrightsquigarrow}[2][]{%
  \ext@arrow 0359\rightsquigarrowfill@{#1}{#2}%
}
\newcommand*\circled[1]{\tikz[baseline=(char.base)]{
    \node[shape=circle, draw, inner sep=0pt, 
    minimum height={\f@size},] (char) {\vphantom{WAH1g}#1};}}
\DeclareRobustCommand\widecheck[1]{{\mathpalette\@widecheck{#1}}}
\def\@widecheck#1#2{%
    \setbox\z@\hbox{\m@th$#1#2$}%
    \setbox\tw@\hbox{\m@th$#1%
       \widehat{%
          \vrule\@width\z@\@height\ht\z@
          \vrule\@height\z@\@width\wd\z@}$}%
    \dp\tw@-\ht\z@
    \@tempdima\ht\z@ \advance\@tempdima2\ht\tw@ \divide\@tempdima\thr@@
    \setbox\tw@\hbox{%
       \raise\@tempdima\hbox{\scalebox{1}[-1]{\lower\@tempdima\box
\tw@}}}%
    {\ooalign{\box\tw@ \cr \box\z@}}}
\newcommand\nthalias[1]{\AddToHook{env/#1/begin}{\crefalias{lemma}{#1}}}
\crefname{section}{Section}{Sections}
\crefname{subsection}{\S}{\S\S}
\crefname{subsubsection}{\S}{\S\S}
\theoremstyle{plain}
\newtheorem{lemma}{Lemma}[section]
\newtheorem{theorem}[lemma]{Theorem}
\theoremstyle{plain}
\theoremstyle{plain}
\newtheorem{definition}[lemma]{Definition}
\newtheorem{remark}[lemma]{Remark}
\newtheorem{remarks}[lemma]{Remarks}
\newtheorem{notation}[lemma]{Notation}
\newtheorem{construction}[lemma]{Construction}
\crefname{definition}{definition}{definitions}
\crefname{ex}{example}{examples}
\crefname{exs}{example}{examples}
\crefname{remark}{remark}{remarks}
\crefname{remarks}{remark}{remarks}
\crefname{convention}{convention}{conventions}
\crefname{notation}{notation}{notations}
\crefname{table}{table}{tables}
\crefname{lemma}{lemma}{lemmas}
\crefname{proposition}{proposition}{propositions}
\crefname{propositionN}{proposition}{propositions}
\crefname{corollary}{corollary}{corollaries}
\crefname{corollaryN}{corollary}{corollaries}
\crefname{theorem}{theorem}{theorems}
\crefname{theoremN}{theorem}{theorems}
\crefname{enumi}{}{}
\crefname{assumption}{assumption}{Assumptions}
\crefname{construction}{construction}{Constructions}
\crefname{sketch}{sketch}{Sketches}
\crefname{question}{question}{Questions}
\crefname{equation}{}{}
\numberwithin{equation}{section}
\theoremstyle{nonumberplain}
\newtheorem{proof}{Proof}
\newcommand\pf[1]{\newtheorem{#1}{Proof of \Cref{#1}}}
\newcommand\bC{{\mathbb C}}
\newcommand\bF{{\mathbb F}}
\newcommand\bG{{\mathbb G}}
\newcommand\bP{{\mathbb P}}
\newcommand\bR{{\mathbb R}}
\newcommand\bZ{{\mathbb Z}}
\newcommand\cC{{\mathcal C}}
\newcommand\cD{{\mathcal D}}
\newcommand\cL{{\mathcal L}}
\newcommand\cN{{\mathcal N}}
\newcommand\cP{{\mathcal P}}
\newcommand\cT{{\mathcal T}}
\newcommand\cY{{\mathcal Y}}
\newcommand\wt{\widetilde}
\newcommand\wh{\widehat}
\DeclareMathOperator{\Ad}{Ad}
\DeclareMathOperator{\id}{id}
\DeclareMathOperator{\spn}{\mathrm{spn}}
\DeclareMathOperator{\GL}{GL}
\newcommand{\cat}[1]{\textsc{#1}}
\newcommand{\qedhere}{\mbox{}\hfill\ensuremath{\blacksquare}}
\title{Frame eversion and contextual geometric rigidity}
\author{Alexandru Chirvasitu}
\begin{document}

\date{}

\newcommand{\Addresses}{{
  \bigskip
  \footnotesize

  \textsc{Department of Mathematics, University at Buffalo}
  \par\nopagebreak
  \textsc{Buffalo, NY 14260-2900, USA}  
  \par\nopagebreak
  \textit{E-mail address}: \texttt{achirvas@buffalo.edu}


}}

\maketitle

\begin{abstract}
  We prove rigidity results describing contextually-constrained maps defined on Grassmannians and manifolds of ordered independent line tuples in finite-dimensional vector or Hilbert spaces. One statement in the spirit of the Fundamental Theorem of Projective Geometry classifies maps between full Grassmannians of two $n$-dimensional Hilbert spaces, $n\ge 3$, preserving dimension and lattice operations for pairs with commuting orthogonal projections, as precisely those induced by semilinear injections unique up to scaling.
  
  In a different but related direction, denote the manifolds of ordered orthogonal (linearly-independent) $n$-tuples of lines in an $n$-dimensional Hilbert space $V$ by $\mathbb{F}^{\perp}(V)$ (respectively $\mathbb{F}(V)$) and, for partitions $\pi$ of the set $\{1..n\}$, call two tuples $\pi$-linked if the spans along $\pi$-blocks agree. A Wigner-style rigidity theorem proves that the symmetric maps $\mathbb{F}^{\perp}(\mathbb{C}^n)\to \mathbb{F}(\mathbb{C}^n)$, $n\ge 3$ respecting $\pi$-linkage are precisely those induced by semilinear injections, hence by linear or conjugate-linear maps if also assumed measurable. On the other hand, in the $\mathbb{F}(\mathbb{C}^n)$-defined analogue the only other possibility is a qualitatively new type of purely-contextual-global symmetry transforming a tuple $(\ell_i)_i$ of lines into $\left(\left(\bigoplus_{j\ne i}\ell_j\right)^{\perp}\right)_i$.
\end{abstract}

\noindent \emph{Key words:
  Grassmannian;
  contextuality;
  eversion;
  frame;
  partition;
  preserver problems;
  projective geometry;
  semilinear maps
}

\vspace{.5cm}

\noindent{MSC 2020: 15A04; 51A05; 51M35; 47B49; 06C15; 51A50; 18A25; 81P13

}

\section*{Introduction}

We study and classify maps between either the full Grassmannians $\bG(V)$ of $n$-dimensional Hilbert spaces $V$ or the respective spaces $\bF^{\perp}(V)$ ($\bF(V)$) of ordered orthogonal (respectively independent) line $n$-tuples. The emerging unifying themes, throughout, are the reconstruction of linear or semilinear structure and the rigidity such maps enjoy whenever they preserve context-appropriate partial compatibility. 

The compatibility-to-rigidity dynamic will be familiar to readers acquainted with various \emph{preserver results} exemplified by \cite{MR4927632,2501.06840v2,GogicPetekTomasevic,MR4830482,Petek-TM,zbMATH01100760,MR1311919,MR1866032,Semrl2,zbMATH05302134} and numerous other sources. Such statements share a family resemblance in classifying maps between matrix spaces preserving various algebraic properties or invariants, prominent among these being commutativity and spectra. Commutativity is what here embodies compatibility preservation, and the resulting rigidity emerges as a consequence of spectrum degeneracy (i.e. coincident eigenvalues). Regarding elements of $\bF^{\bullet}$ as geometric counterparts to simple operators (being interpretable as tuples of eigenspaces), spectrum degeneracy can be modeled as \emph{partition linking} among line $n$-tuples: the requirement that the spans of lines indexed by partition blocks agree. This motivates and ties in with the main results to be proved below, summarized here informally for a brief overview:

\begin{itemize}[wide]
\item \clr{} (Commuting-Lattice Rigidity) Dimension-preserving maps $\bG(V)\to \bG(W)$ respecting lattice operations between pairs of spaces whose underlying orthogonal projections commute are induced by semilinear injections. 

\item \pfrp{} (Partition-Frame Rigidity, Perpendicular) Symmetric-group equivariant maps $\bF^{\perp}(V)\to \bF(W)$ respecting partition linking are induced by semilinear injections, linear or conjugate-linear if the maps in question are measurable. 

\item \pfr{} (Partition-Frame Rigidity, General) Symmetric-group equivariant continuous maps $\bF(V)\to \bF(W)$ respecting partition linking are induced by (semi)linear bijections, or such composed with a single additional involution on $\bF(V)$ referred to below as \emph{eversion}. 
\end{itemize}

In preparation for the more formal statements of the three results sketched above, we remind the reader that the \emph{Grassmannian} \cite[\S 5.1]{ms_nonl} of a (here, finite-dimensional) vector space $V$ over a field $\Bbbk$ is 
\begin{equation*}
  \bG(V)
  :=
  \bigsqcup_{0<d\le \dim V}\bG(d,V)
  ,\quad
  \bG(d,V):=\left\{\text{$d$-subspaces of }V\right\}. 
\end{equation*}
For subspaces $V_i\le V$ of a Hilbert space write (cf. also \cite[proof of Theorem 0.2]{2601.01208v1})
\begin{equation*}
  \begin{aligned}
    V_1 \bigobot V_2
    &\iff
      \text{orthogonal projections }P_{V_i}\text{ with ranges $V_i$ commute}\\
    &\iff
      \left(\text{orthogonal complement }V_1\ominus \left(V_1\cap V_2\right)\text{ in }V_1\right)
      \perp
      \left(V_2\ominus \left(V_1\cap V_2\right)\right).
  \end{aligned}
\end{equation*}
If $\bigobot$ holds we also say that $V_i$ \emph{commute} or are \emph{commeasurable} (it is also the \emph{compatibility} relation of \cite[Proposition 1.18]{pank_wign}). The language follows \cite[\S 1]{MR2149209} or \cite[\S 4]{MR4854325}, say, and is intended as reminiscent of the familiar \cite[\S 2-2, pp.77-78]{mack_qm_1963} quantum-mechanical formalism whereby the simultaneously-measurable observables are the commuting self-adjoint Hilbert-space operators.

Recall also \cite[Definition 2.2]{zbMATH01747827} the meaning of \emph{semilinearity} for a map $V\xrightarrow{T}W$ (or \emph{$\alpha$-linearity} when wishing to emphasize $\alpha$) between spaces over fields $\Bbbk_{V}$ and $\Bbbk_{W}$ respectively: additivity as well as 
\begin{equation*}
  \forall\left(v\in V\right)
  \forall\left(c\in \Bbbk_V\right)
  \left(
    Tc v
    =
    \alpha(c)Tv
  \right)
  ,\quad
  \Bbbk_V
  \xrightarrow[\quad\text{field morphism}\quad]{\quad\alpha\quad}
  \Bbbk_W.
\end{equation*}

\begin{theorem}[CLR: Commuting-Lattice Rigidity]\label{th:clr}
  Let $V,W$ be Hilbert spaces of dimension $n\ge 3$ over $\Bbbk\in \left\{\bR,\bC\right\}$ and $\bC$ respectively and $\bG(V)\xrightarrow{\Psi}\bG(W)$ a dimension-preserving map respecting the partial lattice operations for commeasurable pairs.

  $\Psi$ is of the form
  \begin{equation*}
    V
    \ge 
    V'
    \xmapsto{\quad\Psi\quad}
    TV'
    \le W
    ,\quad
    V\lhook\joinrel\xrightarrow[\quad\text{semilinear}\quad]{\quad T\quad}W,
  \end{equation*}
  for $T$ determined uniquely up to scaling. 
\end{theorem}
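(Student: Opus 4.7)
The plan is to reduce to the Fundamental Theorem of Projective Geometry (FTPG) by showing that $\Psi$ restricted to the space of lines $\bP(V)\subset\bG(V)$ is an injective collineation, applying the version of FTPG available for injective morphisms between projective spaces of dimension $\ge 2$ to obtain a semilinear injection $T\colon V\hookrightarrow W$, and then propagating from lines to arbitrary subspaces.

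I would first extract structural consequences of the hypothesis. Comparable subspace pairs are trivially commeasurable, so $\Psi$ is monotone. Orthogonal pairs $V_1\perp V_2$ are commeasurable with trivial intersection, giving $\Psi(V_1+V_2)=\Psi(V_1)+\Psi(V_2)$; iterating, an orthonormal decomposition $V=\bigoplus_i \bC e_i$ passes to a direct-sum decomposition of $W$. Injectivity of $\Psi$ on $\bG(V)$ can be argued by comparing dimensions inside a joint commeasurable envelope containing two candidate subspaces, reducing to the case of commeasurable pairs where the lattice formulas apply directly.

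The core step is collinearity preservation: three distinct collinear lines $\ell_1,\ell_2,\ell_3$ lie in a common $2$-plane $\pi$, so their $\Psi$-images sit inside $\Psi(\pi)$, a $2$-plane by dimension-preservation, and remain distinct by injectivity — hence collinear in $\bP(W)$. Combined with the obvious non-degeneracy (the image cannot lie on a single projective line since $\Psi$ is injective and $\bP(V)$ has many more points than any projective line of $\bP(W)$), FTPG yields a semilinear injection $T\colon V\hookrightarrow W$, unique up to scaling, with $\Psi(\bC v)=\bC(Tv)$ on lines.

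Extension from lines to arbitrary $V'\le V$ is routine: decompose $V'=\bigoplus_j \bC f_j$ orthogonally, apply perpendicular-sum preservation of $\Psi$ and linearity of $T$ on direct sums to conclude $\Psi(V')=\bigoplus_j \Psi(\bC f_j)=\bigoplus_j T(\bC f_j)=TV'$. The main obstacle I expect is the injectivity step: distinct same-dimensional subspaces need not be commeasurable, so the hypothesis does not directly control their lattice operations, and one has to fall back on inclusion-preservation together with carefully chosen orthonormal replacements inside an enclosing commeasurable envelope to separate their $\Psi$-images.
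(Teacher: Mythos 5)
Your pipeline through Faure's Fundamental Theorem of Projective Geometry is exactly the paper's Step~I, and your extension from lines to arbitrary subspaces via lattice-preservation is routine and correct. The collinearity argument (images of three coplanar lines land in the image $2$-plane because comparable pairs are commeasurable) is fine. But the proof as written has a genuine gap at precisely the step you flag as the expected obstacle: injectivity of $\Psi$ on lines.

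The issue is that two distinct lines $\ell\ne\ell'$ are commeasurable if and only if they are orthogonal, so for a non-orthogonal pair the hypothesis yields \emph{no} lattice identity relating $\Psi\ell$ and $\Psi\ell'$, and there is no ``joint commeasurable envelope'' that contains both and separates their images by dimension counting --- one can easily separate orthogonal lines this way, but that is far from injectivity. (Concretely: if $\Psi\ell=\Psi\ell'$, nothing in the hypotheses prevents $\Psi$ from collapsing $\ell$ and $\ell'$ while still acting bijectively on each maximal orthogonal flag individually.) The paper is careful to note that the earlier result it generalizes, \cite[Proposition 2.5]{2501.06840v2}, needed a continuity hypothesis exactly to handle this point, and the entire new content of the CLR proof is a continuity-free injectivity argument: reduce to $n=3$; introduce the ternary relation $(\ell,\ell')/\ell''$ (orthogonally project $\ell'$ onto the planes through $\ell$); show $\Psi^{-1}\Psi\ell$ is saturated under it and that any saturated set with two points is open; prove closedness similarly using planes; and conclude by connectedness of $\bP V$. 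Without some such mechanism, your proof does not go through, and the ``carefully chosen orthonormal replacements'' you gesture at would need to be spelled out and, as far as I can see, cannot by themselves force $\Psi\ell\ne\Psi\ell'$ for a generic non-orthogonal pair.
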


There is a structural \emph{contextuality} aspect to \clr{}, aligned with how the notion features in mathematical treatments of quantum mechanics \cite[\S 2-2]{mack_qm_1963}. To anchor the discussion, we remind the reader that per \cite[\S B.1]{MR4449330}, say, a \emph{context} attached to a \emph{von Neumann (or $W^*$-)algebra} \cite[\S III.1.8]{blk}
\begin{equation*}
  \cN\le \cL(H)
  :=
  \left(\text{bounded operators on a Hilbert space }H\right)
\end{equation*}
is an abelian $W^*$-subalgebra of $\cN$. 

In \clr{} context refers to families of commeasurable spaces (interpretable quantum-mechanically as simultaneously-measurable yes/no questions), and manifests as the requirement that lattice-operations be preserved only for such pairs. That requirement embodies commutativity via the very definition of commeasurability.

Formalizing partition linking in the earlier overview, we have

\begin{definition}\label{def:pi.rel}
  For a partition $\pi:=\left(\pi_i\right)_{i=1}^s$ of $[n]:=\left\{1..n\right\}=\bigsqcup_{i=1}^s \pi_i$ set
  \begin{equation*}
    \forall\left(\left(\ell_i\right)_i,\left(\ell'_i\right)_i\in \bF^{\bullet}(V)\right)
    \quad:\quad
    \left(\ell_i\right)_i
    \sim_{\pi}
    \left(\ell'_i\right)_i
    \iff
    \forall\left(1\le j\le s\right)
    \left(\bigoplus_{i\in \pi_j}\ell_i=\bigoplus_{i\in \pi_j}\ell'_i\right).
  \end{equation*}
  In words: two line tuples are \emph{$\pi$-linked (or $\pi$-related)} if the subspaces obtained by summing their respective lines over partition index blocks coincide. 
\end{definition}

\begin{theorem}[PFR$_{\perp}$: Partition-Frame Rigidity, Perpendicular]\label{th:pfrp.intro}
  Let $n\in \bZ_{\ge 3}$ and $\Bbbk\in \left\{\bR,\bC\right\}$.
  \begin{enumerate}[(1),wide]
  \item\label{item:th:pfrp.intro:gen} The maps $\bF^{\perp}(\Bbbk^n)\xrightarrow{\Theta}\bF(\bC^n)$ equivariant for the symmetric-group actions permuting lines and preserving $\pi$-relatedness for all partitions $\pi$ of $[n]$ are precisely those of the form 
    \begin{equation}\label{eq:tht.t.intro}
      \bF^{\perp}(\Bbbk^n)
      \ni
      \left(\ell_i\right)_i
      \xmapsto{\quad\Theta=\Theta_T\quad}
      \left(T\ell_i\right)_i
      \in
      \bF(\bC^n)
    \end{equation}
    for semilinear injections $\Bbbk^n\lhook\joinrel\xrightarrow{T}\bC^n$ uniquely determined up to scaling.

  \item\label{item:th:pfrp.intro:cont} The Lebesgue-measurable $\Theta$ in \Cref{item:th:pfrp.intro:gen} are precisely the $\Theta_T$ for linear or conjugate-linear injections $T$. In particular, this holds for continuous $\Theta$. 
  \end{enumerate}
\end{theorem}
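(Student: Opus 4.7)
The plan is to reduce $\Theta$ to a map $\phi:\bP(\Bbbk^n)\to \bP(\bC^n)$ on projective spaces and then invoke the Fundamental Theorem of Projective Geometry. The starting observation is that the partitions $\pi_i:=\{\{i\},[n]\setminus\{i\}\}$ rigidify $\Theta$ componentwise: for $i=1$, any two orthonormal frames $(\ell_1,\ell_2,\dots,\ell_n)$ and $(\ell_1,\ell'_2,\dots,\ell'_n)$ sharing their first line are automatically $\pi_1$-linked, since $\ell_2\oplus\cdots\oplus\ell_n=\ell_1^{\perp}=\ell'_2\oplus\cdots\oplus\ell'_n$. Preservation then forces the first component of $\Theta$ to depend only on $\ell_1$, and $S_n$-equivariance propagates this to all indices, yielding a well-defined $\phi$ with $\Theta((\ell_i)_i)=(\phi(\ell_i))_i$.

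Next I would show that $\phi$ is a \emph{lineation}, sending each projective line $\bP(P)\subset \bP(\Bbbk^n)$ (with $P$ a $2$-plane in $\Bbbk^n$) into a projective line of $\bP(\bC^n)$. Fix such a $P$, choose an orthonormal basis $(e_3,\dots,e_n)$ of $P^{\perp}$, and consider the partition $\pi=\{\{1,2\},\{3\},\dots,\{n\}\}$. As $(\ell_1,\ell_2)$ ranges over orthogonal pairs in $P$, the frames $(\ell_1,\ell_2,e_3,\dots,e_n)$ are pairwise $\pi$-linked, so preservation furnishes a fixed $2$-plane $Q\le \bC^n$ with $\phi(\ell_1)+\phi(\ell_2)=Q$ for every such pair. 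Since every line in $P$ is the first member of some orthogonal pair in $P$, this forces $\phi(\bP(P))\subseteq \bP(Q)$.

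The image of $\phi$ is non-degenerate: lifting any orthogonal basis $(e_1,\dots,e_n)$ of $\Bbbk^n$ through $\Theta$ produces independent $(\phi(e_i))_i$ in $\bC^n$, so $\phi(\bP(\Bbbk^n))$ spans $\bC^n$ and in particular is not contained in a projective line (as $n\ge 3$). A Faure--Fr\"olicher-style refinement of the Fundamental Theorem of Projective Geometry then identifies $\phi$ with $[T]$ for a semilinear injection $T:\Bbbk^n\hookrightarrow \bC^n$, unique up to nonzero scaling, giving part~(1). For part~(2), measurability of $\Theta$ transfers to $\phi$ via a measurable Gram--Schmidt section $\bP(\Bbbk^n)\to \bF^{\perp}(\Bbbk^n)$, and then to the field morphism $\alpha:\Bbbk\to \bC$ associated with $T$ by restricting $\phi$ to the projective line of a fixed $2$-plane and reading off $\alpha$ in affine coordinates. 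Measurable additive functions on $\bR$ being $\bR$-linear forces $\alpha=\mathrm{id}$ when $\Bbbk=\bR$, and the classical fact that the only measurable field automorphisms of $\bC$ are the identity and complex conjugation handles $\Bbbk=\bC$.

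The chief obstacle I anticipate is the passage from lineation to semilinear map in the third step: the classical FTPG requires a collineation (a bijection of projective spaces preserving lines), whereas what the construction delivers is an a priori non-injective map whose image may sit inside a proper subspace. The Faure--Fr\"olicher framework is built precisely to address this, but requires careful verification of the non-degeneracy hypothesis, which in our setting follows cleanly from the independence built into the codomain $\bF(\bC^n)$ of $\Theta$.
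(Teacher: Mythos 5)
Your first two reductions are sound: the singleton-versus-complement partitions do pin down $\Theta$ componentwise to a map $\phi\colon\bP(\Bbbk^n)\to\bP(\bC^n)$, and the partition $\{\{1,2\},\{3\},\dots,\{n\}\}$ argument correctly shows that $\phi$ carries each projective line $\bP(P)$ into a fixed projective line $\bP(Q)$. Part (2) is also fine modulo part (1). The gap is in the final step: Faure's Fundamental Theorem \cite[Theorem 3.1]{zbMATH01747827} is not a theorem about \emph{lineations} but about \emph{morphisms} in the sense of \cite[Definition 2.1]{zbMATH01747827}, i.e.\ maps satisfying $\ell''\le\ell+\ell'\Rightarrow\phi(\ell'')\le\phi(\ell)+\phi(\ell')$ for \emph{all} triples. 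When $\phi(\ell)=\phi(\ell')$ this forces $\phi$ to collapse the entire line $\bP(\ell+\ell')$ to the single point $\phi(\ell)$; but your own argument shows $\phi$ separates any \emph{orthogonal} pair in that plane (both occur together in a frame mapped into $\bF(\bC^n)$), so $\phi$ cannot collapse a whole projective line. Hence if $\phi$ were not injective, the morphism condition would genuinely fail and Faure's theorem would simply not apply — you never get to invoke the ``Faure--Fr\"olicher framework'' in the degenerate case, and the non-degeneracy hypothesis is not the only thing to check. In other words, given lineation, the morphism condition here is \emph{equivalent} to injectivity of $\phi$, and you nowhere prove injectivity; only injectivity on orthogonal pairs comes for free from the $\bF$-codomain.

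This is exactly the hard part the paper isolates. Its proof of PFR$_{\perp}$ does not call Faure directly but routes through \Cref{th:clr}, first upgrading your $\phi$ to a dimension-preserving $\Psi$ on the full Grassmannian and observing that it respects the partial lattice operations on $\bigobot$-commuting pairs. The whole content of the proof of \Cref{th:clr} beyond the Faure step is an injectivity argument: after reducing to $n=3$ it shows that non-singleton fibres of $\Psi|_{\bP V}$ are both open and closed in $\bP V$, hence empty by connectedness. That topological argument relies on being able to play lines off against $2$-planes (e.g.\ the projection construction $(\ell,\ell')/\ell''$), which is information you discard by reducing only to the line-level map $\phi$. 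If you want to keep your more minimal reduction, you must at least supply a direct proof that $\phi$ is injective; without it the appeal to the FTPG is unjustified.
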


\pfrp{} can be regarded as analogous to \emph{Wigner's} classical result \cite[Theorem 4.4]{pank_wign} on maps between Hilbert-space projective spaces, with, again, a contextual layer: the maps $\Theta$ are defined on tuples $\left(\ell_i\right)_1^n$, to be thought of as a line $\ell_1$ (quantum mechanics' familiar \emph{pure states} \cite[\S 2-2, p.75]{mack_qm_1963}), with surrounding context provided by the $\ell_i$, $i\ge 2$. The totality of the lines $\ell_i$, $1\le i\le n$ precisely specify a maximal abelian $*$-subalgebra of $M_n(\Bbbk)$ (those normal operators whose eigenspaces are spans of $\ell_i$), aligning with the aforementioned notion employed in \cite[\S B.1]{MR4449330}.

Enlarging the domain of \Cref{th:pfrp.intro}'s $\Theta$ to the full $\bF$ opens up qualitatively new possibilities. Specifically, the \emph{eversion}\footnote{The term (turning inside-out) is borrowed from its familiar \cite{MR600227} geometric-topology context} self-map of an $\bF^{\bullet}(V)$ space with $\bullet\in \left\{\text{blank},\perp\right\}$ is 
\begin{equation}\label{eq:tht.ev.intro}
  \bF^{\bullet}(V)
  \ni
  \left(\ell_i\right)_i
  \xmapsto{\quad\Theta_{ev}\quad}
  \left(\ell'_i\right)_i
  \in    
  \bF^{\bullet}(V)
  ,\quad
  \ell'_i:=\left(\bigoplus_{j\ne i}\ell_j\right)^{\perp}.
\end{equation}
Naturally, $\Theta_{ev}=\id$ for $\bullet=\perp$, so only blank $\bullet$ will produce an interesting construct. 

\begin{theorem}[PFR: Partition-Frame Rigidity, General]\label{th:pfr.intro}
  For $n\in \bZ_{\ge 3}$ and $\Bbbk\in \{\bR,\bC\}$ the continuous maps $\bF(\Bbbk^n)\xrightarrow{\Theta} \bF(\bC^n)$ equivariant for the symmetric-group actions permuting lines and preserving $\pi$-relatedness for all partitions $\pi$ of $[n]$ are precisely $\Theta=\Theta_T$ or $\Theta_T\circ\Theta_{ev}$ for linear or conjugate-linear bijections $T$.
\end{theorem}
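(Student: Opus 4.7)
The plan is to first restrict $\Theta$ to the orthogonal submanifold $\bF^{\perp}(\Bbbk^n) \subset \bF(\Bbbk^n)$: continuity makes this restriction Lebesgue-measurable, so by \pfrp{}(2) there is a linear or conjugate-linear $T$ with $\Theta|_{\bF^{\perp}(\Bbbk^n)} = \Theta_T$; the tuple-independence requirement in the codomain forces $T$ to be bijective in the sense the statement permits. Post-composing with $\Theta_{T^{-1}}$ (still continuous, $S_n$-equivariant, and $\pi$-relatedness preserving) would reduce the problem to the normalized case $\Theta|_{\bF^{\perp}(\Bbbk^n)} = \id$; the goal thereafter is to conclude $\Theta \in \{\id, \Theta_{ev}\}$.

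Next, I would extract a pointwise function. For $F = (\ell_i)_i$ set $H_i := \bigoplus_{j \ne i}\ell_j$. The relation $\sim_{\pi}$ for $\pi = \{\{i\}, [n]\setminus\{i\}\}$ records exactly the pair $(\ell_i, H_i)$, so its preservation pins $\ell_i^{*} := \Theta(F)_i$ to a continuous function of $(\ell_i, H_i)$; $S_n$-equivariance then packages these into a single continuous $f : \{(\ell, H) : \ell \not\subseteq H\} \to \bG(1, \bC^n)$ independent of $i$. The normalization yields $f(\ell, H) = \ell$ on the orthogonal locus $\{\ell = H^\perp\}$, and the two candidates compatible with the desired conclusion are $f(\ell, H) = \ell$, giving back $\Theta = \id$, and $f(\ell, H) = H^\perp$, giving $\Theta = \Theta_{ev}$ directly from \eqref{eq:tht.ev.intro}.

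The crux will be the pointwise dichotomy $f(\ell, H) \in \{\ell, H^\perp\}$. The additional partition $\pi = \{\{i, j\}, [n] \setminus \{i, j\}\}$ forces the $2$-plane $f(\ell_i, H_i) + f(\ell_j, H_j)$ to depend only on $(\ell_i + \ell_j, H_{ij})$ with $H_{ij} := \bigoplus_{k \ne i, j}\ell_k$, rather than on the individual lines. Combined with the boundary condition $f = \ell$ on the orthogonal locus, I would analyze how $f$ can vary along one-parameter families of $(\ell_i, \ell_j)$ inside a fixed $2$-plane complementary to a fixed $(n-2)$-plane $H_{ij}$, using the rigid sum-dependence and continuity to exclude any interpolating formula. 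For $n \ge 4$ the three-block partitions $\pi = \{\{i\}, \{j\}, [n] \setminus \{i, j\}\}$ supply extra compatibility that should streamline this step; the most delicate case is $n = 3$, where all rigidity must be extracted from two-block linkages alone.

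Once the pointwise dichotomy is in hand, a connectedness argument finishes the proof: write $\{(\ell, H) : \ell \not\subseteq H\} = A \cup B$ with $A = \{f = \ell\}$ and $B = \{f = H^\perp\}$, both closed. Their intersection is the low-dimensional orthogonality locus $\{\ell = H^\perp\}$, and its open complement is connected, so $A$ and $B$ restricted there are disjoint, closed, and cover a connected space, hence one is empty. This globalizes $f$ to a single candidate and so $\Theta$ to $\id$ or $\Theta_{ev}$ after the reduction; undoing the reduction recovers $\Theta = \Theta_T$ or $\Theta_T \circ \Theta_{ev}$ as claimed. The principal obstacle is the pointwise dichotomy of the third step: producing a clean local argument that continuity plus the two-block constraint plus the orthogonal boundary values leave no room for an interpolating $f$ between $\ell$ and $H^\perp$.
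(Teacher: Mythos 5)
Your outline and the paper's argument diverge sharply after the normalization step, and the divergence lands on a gap that you yourself flag as unresolved. You reduce to $\Theta|_{\bF^{\perp}}=\id$ and then try to extract a pointwise function $f(\ell,H)$ governing each component $\Theta(F)_i$, finally seeking the dichotomy $f(\ell,H)\in\{\ell,H^{\perp}\}$ before a connectedness argument. The reduction of $\Theta(F)_i$ to a function of $(\ell_i,H_i)$ is correct (preservation of $\sim_{\pi}$ for the singleton/complement partition gives exactly this), and the closing connectedness globalization is sound. But the step you call ``the crux'' and ``the principal obstacle'' --- the pointwise dichotomy --- is precisely where all the rigidity lives, and no proof of it is offered; you only gesture at analyzing one-parameter families under the two-block constraint. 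That dichotomy is not obviously provable pointwise \emph{before} one already knows the global structure of $\Theta$: a priori $f(\ell,H)$ could be any line transverse to $g(\ell,H)$, and the two-block partition constraint plus continuity do not visibly pin it to $\{\ell,H^{\perp}\}$ without a substantial local argument you have not supplied.

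The paper takes a genuinely different, operator-level route that avoids this issue. After the same normalization it studies the family $T_S$ (with $\Theta|_{S\bF^{\perp}_{\cdot}}=\Theta_{T_S}$) directly, shows that $\wh{S}:=T_S S$ lies in the bicommutant of $S$ and, crucially, converts preservation of the relation $\bigobot$ of \Cref{eq:obot.gen} into the \emph{local linear dependence} condition $\forall\ell\,\big(\wh{S}\ell\in\spn\{S\ell,S^{-1}\ell\}\big)$ together with the analogous condition for $\wh{S}^{*-1}$. It then invokes the Meshulam--\v{S}emrl theorem on locally linearly dependent operators and the argument of \cite[Proposition 2.10]{2501.06840v2} to conclude $\wh{S}\in\bC S^{\pm1}$, after which the connectedness/openness reductions finish. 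This is where the theorem's content is, and your proposal has no analogue of it. To make your scheme work you would need to independently re-derive the local rigidity encoded in those operator-theoretic lemmas; as written, the proposal identifies the right reductions at the start and end but leaves the central step unproved.
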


Line tuples once more embody context, provided it is interpreted setup-appropriately: the maximal abelian semisimple algebra generated by those diagonalizable operators whose eigenspaces are spans of the $\ell_i$ for any given $(\ell_i)_i\in \bF(V)$. Although orthogonality is no longer central to the discussion, maximal abelianness still is. 

The novel feature in \pfr{} is eversion: within the ambit of the present discussion, it serves as a purely global-contextual symmetry: meaningless on states (lines alone), requiring the full ambient $(\ell_i)_i$ in order to even make sense of.

The links to preserver theory, moreover, persist surreptitiously: the eversion map $\Theta_{ev}$ and its functorial counterparts in \Cref{def:evrs} below are geometric manifestations of preservers, in which context they retain their somewhat ``exotic'' character. Surprisingly to the authors of \cite[Proposition 2.8]{2501.06840v2}, apart from the usual candidates of (transpose) conjugation, another possibility for commutativity and spectrum-preserving maps defined on various spaces of semisimple operators are those of the form
\begin{equation}\label{eq:exotic.map}
  \Ad_{S} N
  \xmapsto{\quad}
  \Ad_{S^{-1}} N
  ,\quad
  N\text{ normal}
  ,\quad
  S\text{ positive}. 
\end{equation}
These are indeed (perhaps non-obviously) well defined, and may \cite[Theorem 0.4]{2601.01208v1} or may not \cite[Proposition 2.14]{2501.06840v2} be continuous, depending on the parameters of the specific problem. Eversion is precisely what \Cref{eq:exotic.map} boils down to geometrically, upon recasting simple-spectrum operators as their respective eigenline tuples. 

\Cref{se:nat.trnsf.lntpl} below proves \clr{}, reformulated also as \Cref{th:clr.bis}, and recasts \pfrp{} and \pfr{} as \Cref{th:pfrp,th:pfr} respectively. These employ the more involved functorial language of \Cref{not:flg} and \Cref{con:yd} (placing those statements in a suitable categorical perspective).

\subsection*{Acknowledgments}

I am grateful for insightful comments and suggestions from P. \u{S}emrl. 


\section{Natural transformations between line-tuple functors}\label{se:nat.trnsf.lntpl}

\Cref{th:clr} is a version of \cite[Proposition 2.5]{2501.06840v2}, itself a variant (and reliant on) of the celebrated \emph{Fundamental Theorem of Projective Geometry} \cite[Theorem 3.1]{zbMATH01747827}. 

\pf{th:clr}
\begin{th:clr}
  The current statement generalizes \cite[Proposition 2.5]{2501.06840v2} essentially in removing the continuity assumption, and the proof strategy delivering that result will serve here too; the requisite modifications will affect only the portion of the argument not relying on the usual Fundamental Theorem.

  \begin{enumerate}[(I),wide]
  \item\label{item:th:clr:pf.cncl} \textbf{: Conclusion, assuming $\Psi|_{\bP V:=\bG(1,V)}$ injective.} Said injectivity ensures that the restriction of $\Psi$ to the projective space $\bP V$ of lines in $V$ is a \emph{projective-space morphism} in the sense of \cite[Definition 2.1]{zbMATH01747827}:
    \begin{equation*}
      \forall\left(\ell,\ell',\ell''\in \bP V\right)
      \left(
        \ell''\le \ell+\ell'
        \xRightarrow{\quad}
        \Psi \ell''
        \le
        \Psi \ell+\Psi\ell'
      \right).
    \end{equation*}
    Indeed, the hypotheses imply that $\Psi$ respects inclusions, so that $\Psi \ell+\Psi\ell'$ must be a 2-plane containing $\Psi\ell''$ provided $\Psi\ell$ and $\Psi\ell'$ are distinct.

    Assuming injectivity, then, ensures via \cite[Theorem 3.1]{zbMATH01747827} that the projective-space restriction $\Psi|_{\bP V}$ is induced by a $T$ as in the statement. This then of course extends to $\Psi$ period by the assumed lattice-operation preservation. We are thus left having to prove injectivity on lines.

  \item\label{item:th:clr:red.n3}\textbf{: Reduction to $n=3$.} The problem to be so reduced, recall, is the injectivity of $\Psi|_{\bP V}$. Were $\Psi$ to assign equal values to two distinct lines in $V$, we could simply substitute
    \begin{itemize}[wide]
    \item for $V$, a 3-space $V'\le V$ containing those two lines;
    \item and for $W$, the image $W':=\Psi V'$. 
    \end{itemize}    
    We henceforth assume $n=3$ and seek to prove $\Psi|_{\bP V}$ injective.

  \item\label{item:th:clr:preim.open}\textbf{: Non-singleton preimages of $\Psi|_{\bP V}$ are open.} Assume for a contradiction that $\Psi\ell=\Psi\ell'$ for lines $\ell\ne \ell'\le V$. We observed in \Cref{item:th:clr:pf.cncl} above that
    \begin{equation}\label{eq:plpl}
      \forall\left(\text{lines }\ell_1\ne \ell_2\right)
      \left(
        \Psi \ell_1\ne \Psi\ell_2
        \xRightarrow{\quad}
        \Psi\left(\ell_1+\ell_2\right)
        =
        \Psi\ell_1+\Psi\ell_2
      \right).
    \end{equation}
    Consequently, whenever $\Psi \ell''$ differs from $\Psi\ell$ (and hence also $\Psi\ell'=\Psi\ell$) we have
    \begin{equation}\label{eq:pll}
      \Psi\left(\ell''+\ell\right)
      =
      \Psi\ell''+\Psi\ell
      \xlongequal[\quad]{\quad\text{\Cref{eq:plpl}}\quad}
      \Psi\ell''+\Psi\ell'
      =
      \Psi\left(\ell''+\ell'\right).
    \end{equation}
    This applies in particular to lines $\ell''$ obtained by projecting $\ell'$ orthogonally onto planes $\pi\ni \ell$:
    \begin{equation*}
      \begin{aligned}
        \ell''
        &:=\pi\cap \left(\pi':=\ell'+\pi^{\perp}\right)
          ,\quad
          \text{arbitrary 2-plane $\pi\ge \ell$}\\
        &=\left(\pi:=\ell+\pi'^{\perp}\right)\cap \pi'
          ,\quad
          \text{arbitrary 2-plane $\pi'\ge \ell'$}.
      \end{aligned}
    \end{equation*}
    Or:
    \begin{itemize}
    \item the 2-plane $\pi\ge \ell$ is chosen arbitrarily;
    \item and the 2-plane $\pi'\ge \ell'$ is then uniquely determined by the requirement that $\pi\bigobot \pi'$;
    \item or vice versa (i.e. perform the self-same construction with the roles of $\ell$ and $\ell'$ reversed). 
    \end{itemize}
    Write $(\ell,\ell')/\ell''$ whenever $\ell''$ is obtained from the other two lines by the procedure just described, for some choice of $\pi$. Because $\pi\ne \pi'$ are distinct $\bigobot$-related 2-planes, $\Psi$ cannot conflate them; consequently,
    \begin{equation*}
      (\ell,\ell')/\ell''
      \xRightarrow{\quad\text{\Cref{eq:pll}}\quad}
      \Psi\ell''=\Psi\ell=\Psi\ell'. 
    \end{equation*}
    The set $\Psi^{-1}\Psi\ell\subseteq \bP V$ is thus saturated under the $(-,-)/\bullet$ relation in the sense that whenever that relation's first two arguments belong to said set so does the third. The conclusion now follows from the elementary geometric observation that a $(-,-)/\bullet$-saturated subset of $\bP V$ containing at least two distinct lines is open.
    
  \item\label{item:th:clr:preim.clsd}\textbf{Non-singleton preimages of $\Psi|_{\bP V}$ are closed.} Equivalently, their complements are open. Take $\ell\ne \ell'$ as above, with common $\Psi$-image, and consider $\ell''\not\in \Psi^{-1}\Psi\ell$ which (by openness proven in \Cref{item:th:clr:preim.open}) we may as well assume non-coplanar with $\ell,\ell'$. We then have
    \begin{equation*}
      \Psi\left(\pi:=\ell+\ell''\right)
      =
      \Psi\left(\pi':=\ell'+\ell''\right)
      \ne
      \Psi\pi''
    \end{equation*}
    for \emph{some} 2-plane $\pi\ge \ell''$. Now, \Cref{item:th:clr:preim.open} applied to planes rather than lines proves $\Psi^{-1}\Psi\pi=\Psi^{-1}\Psi\pi'$ open, and
    \begin{equation*}
      \forall\left(\wt{\pi}\in \Psi^{-1}\Psi\pi\right)
      \bigg(
      \Psi\left(\wt{\pi}\cap \pi''\right)
      =
      \Psi\wt{\pi}\cap \Psi\pi''
      =
      \Psi\pi\cap \Psi\pi''
      =
      \Psi\left(\ell''=\pi\cap \pi''\right)
      \bigg).
    \end{equation*}
    The preimage $\Psi^{-1}\Psi\ell''$ is thus a non-singleton, so open by \Cref{item:th:clr:preim.open}. 
    
  \item\label{item:th:clr:cncl}\textbf{Conclusion:} immediate from \Cref{item:th:clr:preim.open} and \Cref{item:th:clr:preim.clsd}, given the connectedness of $\bP V$.  \qedhere
  \end{enumerate}  
\end{th:clr}

We record also the following equivalent version of \Cref{th:clr}.

\begin{theorem}\label{th:clr.bis}
  Let $V,W$ be Hilbert spaces of dimension $n\ge 3$ over $\Bbbk\in \left\{\bR,\bC\right\}$ and $\bC$ respectively and $\bG(V)\xrightarrow{\Psi}\bG(W)$ a map preserving inclusions and the linear independence (but not necessarily the orthogonality) of orthogonal line tuples.

  $\Psi$ is of the form
  \begin{equation*}
    V
    \ge 
    V'
    \xmapsto{\quad\Psi\quad}
    TV'
    \le W
    ,\quad
    V\lhook\joinrel\xrightarrow[\quad\text{semilinear}\quad]{\quad T\quad}W,
  \end{equation*}
  for $T$ determined uniquely up to scaling.  \qedhere
\end{theorem}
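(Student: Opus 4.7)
The plan is to deduce the hypotheses of \Cref{th:clr}---dimension preservation and partial lattice preservation for commeasurable pairs---from the present assumptions, and then invoke \Cref{th:clr} directly. The reverse implication, that any semilinear-injection-induced $\Psi_T$ satisfies the hypothesis here, is routine.

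First I would check that $\Psi$ sends lines to lines: any $\ell \in \bP V$ embeds in an orthogonal line $n$-tuple spanning $V$, whose image is a linearly independent $n$-tuple of lines in $W$ and hence spans all of $W$. In particular $\Psi\ell$ is a line and $\Psi V = W$. Given $V' \leq V$ of dimension $d$, fix an orthogonal basis $(e_i)_{i=1}^d$ of $V'$, extend to $(e_i)_{i=1}^n$ of $V$, and set $L_i := \Psi\langle e_i\rangle$; inclusion preservation then gives $\bigoplus_{i \leq d} L_i \leq \Psi V'$, yielding the lower bound $\dim \Psi V' \geq d$.

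The matching upper bound $\dim \Psi V' \leq d$ is the heart of the reduction. I would extract strict inclusion preservation from the hypothesis by observing that a maximal chain in $\bG(V)$ through $V'$ of length $n$ must map to a chain of strict inclusions in $\bG(W)$, forced by the linear independence of the full image tuple $(L_i)_{i=1}^n$ to exhibit consecutive dimensions $1, 2, \ldots, n$; this pins $\dim \Psi V' = d$. With dimension preservation in hand, the commeasurable-lattice step is formal: for $V_1 \bigobot V_2$ with joint orthogonal basis indexed so that $V_j = \operatorname{span}\{e_i : i \in A_j\}$ for subsets $A_j \subseteq [n]$, dimension counting upgrades $\bigoplus_{i \in A_j} L_i \leq \Psi V_j$ to equality, and the lattice operations then reduce combinatorially to Boolean operations on the index subsets, agreeing with those performed on the $V_j$. \Cref{th:clr} then applies and yields $\Psi = \Psi_T$.

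The principal obstacle is precisely the extraction of strict inclusion preservation in the upper-bound step: one must rule out degenerate collapses $\Psi V_1 = \Psi V_2$ along proper inclusions $V_1 < V_2$ by leveraging the linear independence of orthogonal tuples straddling such a gap. Once this monotonicity refinement is available, the remainder is bookkeeping. In effect, the present statement and \Cref{th:clr} repackage the same underlying rigidity constraint---maximal-chain length preservation in $\bG(V)$ on the one hand, lattice compatibility on commeasurable pairs on the other---in two complementary languages.
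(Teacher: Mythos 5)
The reduction strategy you outline---send lines to lines, establish dimension preservation, deduce the commeasurable-lattice hypothesis, apply \Cref{th:clr}---is the right skeleton, and several pieces (lines to lines, the lower bound $\dim\Psi V'\ge d$, the final commeasurable-lattice bookkeeping) are correct. But the step you yourself flag as the ``principal obstacle,'' the upper bound $\dim\Psi V'\le d$ via strict-inclusion preservation along a maximal chain, is not actually supplied, and it cannot be extracted from the literal hypothesis as stated. Take $n=3$, $V=W=\bC^3$, and let $\Psi$ act as the identity on lines while sending every $2$-plane, as well as $V$ itself, to $W$. This $\Psi$ preserves inclusions (one-directionally) and sends orthogonal line tuples to linearly independent ones, yet it conflates all $2$-planes: the maximal chain $\langle e_1\rangle < \langle e_1,e_2\rangle < V$ maps to $\langle e_1\rangle < W = W$, so strictness fails and the independence of $(L_1,L_2,L_3)$ forces nothing beyond $\dim\Psi\langle e_1,e_2\rangle\ge 2$. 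Your claim that independence of the full image tuple ``forces consecutive dimensions $1,2,\ldots,n$'' along the chain is therefore not a refinement awaiting a lemma; it is the content of the theorem, and it is false under the literal reading.

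What closes the gap is a strengthened reading of the hypothesis: $\Psi$ should carry orthogonal tuples of \emph{subspaces} (arbitrary dimensions, not only lines) to linearly independent tuples. Under that reading no chain is needed: for $V'\le V$ of dimension $d$, independence of $\Psi V'$ and $\Psi(V'^{\perp})$ together with the line-derived lower bounds $\dim\Psi V'\ge d$ and $\dim\Psi(V'^{\perp})\ge n-d$ inside the $n$-dimensional $W$ forces $\dim\Psi V'=d$ at once. From there your observation that $\Psi V_j=\bigoplus_{i\in A_j}L_i$ for commeasurable pairs with a common orthogonal eigenbasis, and the consequent reduction of lattice operations to Boolean operations on index sets, is exactly right, and \Cref{th:clr} then applies. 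In short: the bookkeeping surrounding the reduction is sound, but the obstacle you identify is real rather than technical, and the fix goes through the hypothesis rather than through a more careful chain argument.
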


\Cref{not:flg} below is a variant of \cite[Notation 1.3]{2601.01208v1}. We adopt familiar terminology (\cite[\S 4.1]{fh_rep-th}, \cite[\S 7.2]{st_en-comb-2_2e_2024}) on \emph{partitions}
\begin{equation*}
  \mu=\left(\mu_1\ge \cdots\ge \mu_s>0\right)
  ,\quad
  \sum_j \mu_j=n
  \quad
  \left(\text{shorthand: }\mu\vdash n\text{ or }|\mu|=n\right)
\end{equation*}
of a positive integer $n$. These have associated \emph{Young} (or \emph{Ferrers}) \emph{diagrams}: $s$ left-justified rows, the $j^{th}$ counting from the top consisting respectively of $\mu_j$ empty boxes. Filling these with the symbols $[n]:=\left\{1..n\right\}$ will produce a \emph{(Young) tableau} $\wh{\mu}\vdash n$ \emph{of shape $\mu$}. This then provides a partition
\begin{equation*}
  [n]=\bigsqcup_j \wh{\mu}_j
  ,\quad
  \wh{\mu}_j:=\left\{\text{symbols on row $j$}\right\}
\end{equation*}
in the usual set-theoretic sense. It is occasionally convenient to pad a partition $\mu=(\mu_j)$ with terminating zeros; should the need arise, we do this tacitly. 

The \emph{reverse refinement order} \cite[post Theorem 13.3]{andr_part_1998} $\wh{\mu} \preceq \wh{\nu}$ for $n:=|\mu|=|\nu|$ is defined by blocks of the $[n]$-partition (associated to) $\wh{\mu}$ being contained in $\wh{\nu}$-blocks.

\begin{remarks}\label{res:why.rev}
  \begin{enumerate}[(1),wide]
  \item By a slight abuse, if convenience suggests that the components of a partition are to be listed in a disordered fashion, they are to be interpreted as having been ordered appropriately; e.g. $(1,3,2)=(3,2,1)\vdash 6$.
    
  \item The reversal in defining `$\preceq$' is a matter of convention, adopted here for its compatibility with the familiar \cite[\S 7.2]{st_en-comb-2_2e_2024} \emph{dominance} or \emph{majorization} order
    \begin{equation*}
      \forall\left(\mu,\nu\vdash n\right)
      \left(
        \mu\le \nu
        \iff
        \forall j
        \left(\mu_1+\cdots+\mu_j\le \nu_1+\cdots+\nu_j\right)
      \right)
    \end{equation*}
    on (numerical rather than set-theoretic) partitions: if $\wh{\mu}\preceq \wh{\nu}$ then $\mu\le \nu$.
  \end{enumerate}  
\end{remarks}

\begin{notation}\label{not:flg}
  \begin{enumerate}[(1),wide]
  \item\label{item:not:flg:fmu}  For an $n$-dimensional $\Bbbk$-vector space $V$ and a partition $\mu\vdash n$ set
    \begin{equation*}
      \bF_{\mu}(V)
      :=
      \left\{
        (V_1,\ \cdots,\ V_s)
        \in
        \prod_{j=1}^s \bG(\mu_j,V)
        \ :\
        \sum_j V_j=V
      \right\}.
    \end{equation*}
    Note that the dimension constraints imply that the components of any $(V_j)\in \bF_{\mu}(V)$ are linearly independent.

  \item\label{item:not:flg:hilb} If furthermore $\Bbbk\in \left\{\bR,\bC\right\}$ and $V$ is equipped with a Hilbert-space structure, $\bF_{\mu}^{\perp}(V)$ denotes the analogous space consisting of mutually-orthogonal tuples $(V_j)$ (so that $\bF^{\perp}(V)\subseteq \bF(V)$). 
    
  \item\label{item:not:flg:rfnmt} Every reverse refinement $\wh{\mu}\preceq \wh{\nu}$ induces a map
    \begin{equation}\label{eq:fmunu}
      \bF^{\bullet}_{\mu}(V)
      \ni
      (V_j)_{j=1}
      \xmapsto{\quad \bF^{\bullet}_{\wh{\mu}\preceq \wh{\nu}} = \bF^{\bullet}_{\wh{\mu}\preceq \wh{\nu}}(V)\quad}
      \left(
        W_k
        :=
        \sum_{\wh{\mu}_j\subseteq \wh{\nu}_k} V_j
      \right)_k
      \in
      \bF^{\bullet}_{\nu}(V);
    \end{equation}
    in words, the refinement indicates which components of $(V_j)$ to sum in order to form the components of its image $(W_k)_k$.
  \end{enumerate}
\end{notation}

\begin{remarks}\label{res:sym.gp.act}
  \begin{enumerate}[(1),wide]
  \item\label{item:res:sym.gp.act:rc} We work mostly over the real or complex numbers, in which case $\bF^{\bullet}_{\mu}(V)$ are also equipped with their usual (real/complex-manifold) topologies inherited from those of the Grassmannians. All references to continuity or weaker regularity properties such as being \emph{Borel} or (Lebesgue) \emph{measurable} \cite[\S\S 1.11, 2.20]{rud_rc_3e_1987}, in the context of discussing $\bF^{\bullet}_{\mu}$, will assume that setup.

  \item\label{item:res:sym.gp.act:sn.acts} For a partition $\mu$ denote by
    \begin{equation*}
      \mathrm{jmp}(\mu,1)
      ,\quad
      \mathrm{jmp}(\mu,2)
      ,\quad\cdots
    \end{equation*}
    the strictly-positive members of the sequence $(\mu_j-\mu_{j+1})_j$, listed $j$-increasingly. Each $F^{\bullet}_{\mu}$ comes equipped with a free continuous action by the product
    \begin{equation*}
      S_{\mu}
      :=
      \prod_i S_{\mathrm{jmp}\left(\mu^{\perp},i\right)}
      ,\quad
      S_m
      :=
      \text{symmetric group on $m$ symbols},
    \end{equation*}
    where $\mu^{\perp}$ is the partition \emph{conjugate} \cite[Notation]{fult_y_1997} to $\mu$: in Young-diagram terms, one's rows are the other's columns.

    The first factor $S_{\mathrm{jmp}\left(\mu^{\perp},1\right)}$ permutes the largest terminal segment of equidimensional $V_j$ listed in $(V_j)_j\in \bF^{\bullet}_{\mu}$, the next-to-last $S_{\bullet}$ permutes the next maximal segment of equidimensional components, etc.

  \item In the context of \Cref{not:flg}\Cref{item:not:flg:rfnmt}, the fixed reverse refinement $\wh{\mu}\preceq \wh{\nu}$  also provides an embedding $S_{\nu}\le S_{\mu}$, making both the domain and codomain of \Cref{eq:fmunu} into $S_{\nu}$-spaces; that map is easily checked to be equivariant for the two respective $S_{\nu}$-actions. 
  \end{enumerate}
\end{remarks}

\begin{construction}\label{con:yd}
  Having fixed $V$ (so that $\bF$ means $\bF(V)$ throughout), the varying subscripts of $\bF_{\cdot}^{\bullet}$ make the latter into functors
  \begin{equation*}
    \cY\cD
    =
    \cY\cD_{n:=\dim V}
    \xrightarrow{\quad \bF_{\cdot}^{\bullet}\quad}
    \left[
      \begin{gathered}
        \cat{Set}\left(\text{in general}\right)\\
        \left.
          \begin{gathered}
            \cat{Top}\left(\text{topological spaces}\right)\\
            \cat{Meas}\left(\text{\emph{measurable spaces}: \cite[Definition 1.3]{rud_rc_3e_1987}}\right)        
          \end{gathered}
        \right\}
        \text{ if $\Bbbk\in \left\{\bR,\bC\right\}$}
      \end{gathered}
    \right.
  \end{equation*}
  for an appropriately-defined category $\cC$: the objects are young diagrams $\mu\vdash n$, the morphisms $\mu\to \nu$ are the inverse refinements $\wh{\mu}\preceq \wh{\nu}$, and the composition is the unique, obvious one.

  It thus makes sense, in this context, to refer to the usual functor-theoretic constructs attached to $\bF^{\bullet}_{\cdot}(V)$ (e.g. natural transformations/isomorphisms between two such, for distinct $V$, $V'$). 
\end{construction}

\begin{remark}\label{re:2cat.constr.yd}
  \Cref{con:yd}'s category $\cY\cD=\cY\cD_n$ can be recovered via familiar 2-categorical constructs:
  \begin{itemize}[wide]
  \item Consider the dominance-ordered poset $\cP$ of partitions $\vdash n$ as a category in the usual \cite[Example 1.2.6.b]{brcx_hndbk-1} manner, with one arrow $\to$ corresponding to every $\le$.

  \item The same goes for the poset $\left(\cT,\preceq\right)$ of tableaux $\vdash n$ under reverse refinement.

  \item There is a functor (i.e. poset morphism) $\cT\xrightarrow{U}\cP$ forgetting tableau fillings.

  \item This produces the \emph{comma category} \cite[Definition 1.6.1]{brcx_hndbk-1} $U\downarrow \id_{\cP}$, whose objects are pairs
    \begin{equation*}
      \left(\wh{\mu}\in \cT,\ \nu\in \cP\right)
      \quad\text{with}\quad
      \mu\le \nu.
    \end{equation*}

  \item There is a natural transformation
    \begin{equation*}
      \begin{tikzpicture}[>=stealth,auto,baseline=(current  bounding  box.center)]
        \path[anchor=base] 
        (0,0) node (l) {$U\downarrow\id_{\cP}$}
        +(4,0) node (r) {$\cP$}
        ;
        \draw[->] (l) to[bend left=20] node[pos=.4,auto] {$\scriptstyle \left(\wh{\mu},\nu\right)\mapsto \mu$} (r);
        \draw[->] (l) to[bend right=20] node[pos=.4,auto,swap] {$\scriptstyle \left(\wh{\mu},\nu\right)\mapsto \nu$} (r);
        \draw[-implies,double equal sign distance] (2,.3) to node[pos=.4,auto] {$\scriptstyle \alpha$} (2,-.1);
      \end{tikzpicture}
    \end{equation*}
    induced by $\mu\le \nu$;

  \item Finally, set $\cY\cD:=\cat{Inv}(\alpha)$: the \emph{inverter} of $\alpha$, i.e. \cite[Exercise 2.m]{ar} the full subcategory of $U\downarrow\id_{\cP}$ over which $\alpha$ is an isomorphism.
  \end{itemize}
\end{remark}

\begin{theorem}\label{th:pfrp}
  Let $n\in \bZ_{\ge 3}$ and $\Bbbk\in \{\bR,\bC\}$.
  \begin{enumerate}[(1),wide]
  \item\label{item:th:pfrp:gen} The natural transformations $\bF_{\cdot}^{\perp}(\Bbbk^n)\xrightarrow{\Theta} \bF_{\cdot}(\bC^n)$ of $\cat{Set}$-valued functors, symmetric in the sense of equivariance with respect to the $S_{\bullet}$-actions in \Cref{res:sym.gp.act}\Cref{item:res:sym.gp.act:sn.acts}, are precisely those of the form
    \begin{equation}\label{eq:tht.t}
      \bF_{\mu}^{\perp}(\Bbbk^n)
      \ni
      \left(V_i\right)_i
      \xmapsto{\quad\Theta=\Theta_T\quad}
      \left(TV_i\right)_i
      \in
      \bF_{\mu}(\bC^n)
    \end{equation}
    for semilinear injections $\Bbbk^n\lhook\joinrel\xrightarrow{T}\bC^n$.

  \item\label{item:th:pfrp:cont} The $\cat{Top}$- or $\cat{Meas}$-valued natural transformations $\bF_{\cdot}^{\perp}(\Bbbk^n)\xrightarrow{\Theta} \bF_{\cdot}(\bC^n)$ are $\Theta=\Theta_T$ for linear or conjugate-linear injections $\Bbbk^n\lhook\joinrel\xrightarrow{T}\bC^n$.
  \end{enumerate}
\end{theorem}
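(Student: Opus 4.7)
The plan is to reduce \Cref{th:pfrp} to \clr{} (\Cref{th:clr}) by extracting, from a symmetric natural transformation $\Theta$, a single map $\Psi : \bG(\Bbbk^n) \to \bG(\bC^n)$ on full Grassmannians, verifying the hypotheses of \clr{}, and then recovering $\Theta$ from the resulting semilinear injection $T$. Concretely, for $V' \le \Bbbk^n$ of dimension $0 < d < n$ the orthogonal pair $(V', V'^{\perp}) \in \bF^{\perp}_{(d,n-d)}(\Bbbk^n)$ is canonical, so set $\Psi(V') := $ first component of $\Theta_{(d,n-d)}(V', V'^{\perp})$; put $\Psi(\Bbbk^n) := \bC^n$ (forced, as $\bF^{\perp}_{(n)}$ is a singleton). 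Dimension preservation is immediate.

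The crux---and the main obstacle---is lattice compatibility on commeasurable pairs. For commeasurable $V_1, V_2 \le \Bbbk^n$, simultaneously diagonalize them by an orthonormal basis $(e_i)_{i=1}^n$, so that $V_1 = \spn\{e_i : i \in A\}$ and $V_2 = \spn\{e_i : i \in B\}$ for some $A, B \subseteq [n]$. Let $(\ell'_i)_i := \Theta_{(1^n)}(\Bbbk e_1, \ldots, \Bbbk e_n) \in \bF_{(1^n)}(\bC^n)$. Applying naturality of $\Theta$ under the four refinements $(1^n) \preceq (d, n-d)$ whose top-row tableaux pick out the index sets $A$, $B$, $A \cap B$, $A \cup B$ identifies each of $\Psi(V_1), \Psi(V_2), \Psi(V_1 \cap V_2), \Psi(V_1 + V_2)$ with the sum of the $\ell'_i$ over the corresponding index set. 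Linear independence of the $\ell'_i$, built into the definition of $\bF_{(1^n)}(\bC^n)$, then yields $\Psi(V_1 \cap V_2) = \Psi(V_1) \cap \Psi(V_2)$ and $\Psi(V_1 + V_2) = \Psi(V_1) + \Psi(V_2)$---the conversion from functorial naturality to the lattice hypothesis of \clr{}, in which the $\bF_\mu$-structure that \clr{} alone does not see plays the decisive role.

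Applying \Cref{th:clr} now produces a semilinear injection $T : \Bbbk^n \hookrightarrow \bC^n$ with $\Psi(V') = TV'$, unique up to scaling. For $(\ell_i) \in \bF^{\perp}_{(1^n)}(\Bbbk^n)$, naturality under the $n$ refinements $(1^n) \preceq (1, n-1)$ picking out each coordinate yields $\Theta_{(1^n)}(\ell_i) = (T\ell_i)$. For general $\mu$, any $(V_j) \in \bF^{\perp}_\mu(\Bbbk^n)$ admits an orthogonal line refinement, and naturality under the ensuing $(1^n) \preceq \mu$ gives $\Theta_\mu(V_j) = (TV_j) = \Theta_T(V_j)$, establishing Part~(1).

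For Part~(2), measurability of $\Theta$ makes $\Psi|_{\bP(\Bbbk^n)}$ measurable---being the composition of $\ell \mapsto (\ell, \ell^{\perp})$, $\Theta_{(1,n-1)}$, and the projection to the first coordinate. A Borel section of the principal bundle $\Bbbk^n \setminus \{0\} \twoheadrightarrow \bP(\Bbbk^n)$ then lifts $\Psi|_{\bP}$ to a Borel representative of $T$; for fixed $v \neq 0$, measurability of $c \mapsto T(cv) = \alpha(c)\, Tv$ forces the underlying field morphism $\alpha$ to be Borel. The classical facts that $\bR$ admits no nontrivial field endomorphism and that the only Borel endomorphisms of $\bC$ are $\id$ and complex conjugation then force $T$ to be $\bR$- or $\bC$-linear, respectively conjugate-linear, completing the proof.
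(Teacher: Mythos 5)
Your proposal is correct and follows essentially the same route as the paper: extract the Grassmannian map $\Psi$ from $\Theta_{(d,n-d)}$ applied to orthogonal pairs, verify the hypotheses of \clr{} via naturality under refinements of a common orthogonal line refinement of a commeasurable pair (which is exactly what the paper packages as ``$\Theta$ preserves $\bigobot$''), apply \clr{}, and then for Part~(2) invoke measurability of the underlying field embedding together with automatic continuity of measurable additive maps. Your writeup spells out more of the bookkeeping (and incidentally writes $(1,n-1)$ where the Young diagram should be $(n-1,1)$ for $n\ge 3$, a harmless slip), but the decomposition of the argument and the key lemma are those of the paper.
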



\begin{proof}
  That all $\Theta_T$ do satisfy the requirements is not in question, so we only address the converse. 
  
  \Cref{item:th:pfrp:cont} follows from \Cref{item:th:pfrp:gen}: the measurability of $\Theta$ (or rather, of its individual component maps) entails that of $T$ and the underlying field embedding $\Bbbk\lhook\joinrel\xrightarrow{\alpha}\bC$ rendering $T$ semilinear. One concludes by noting that the only continuous field embeddings are the usual $\bR\lhook\joinrel\to \bC$ if $\Bbbk=\bR$ and the identity and complex conjugation otherwise, for instance as a consequence of the automatic linearity \cite[Theorem 9.4.3]{kczm_func-eq_2e_2009} of all measurable additive maps $\bR\to \bR$. 
  
  As to \Cref{item:th:pfrp:gen}, observe that the functoriality of $\Theta$ and its assumed symmetry jointly ensure that it specializes to
  \begin{equation*}
    \bF_{(d,n-d)}^{\perp}(\Bbbk^n)
    \ni
    \left(U,U^{\perp}\right)
    \xmapsto{\quad\Theta\quad}
    \left(\Psi U,\Psi \left(U^{\perp}\right)\right)
    \in
    \bF_{(d,n-d)}(\bC^n)
  \end{equation*}
  for a map $\bG(\Bbbk^n)\xrightarrow{\Psi}\bG(\bC^n)$. Note next that (again, by symmetry/functoriality) $\Theta$ will preserve the binary relation
  \begin{equation}\label{eq:obot.gen}
    \forall\left(
      \left(V_{\pm i}\right)_i\in \bF^{\bullet}_{\pm\mu}(V)
      \text{ respectively}
    \right)
    \quad:\quad
    \left(V_{+i}\right)
    \bigobot
    \left(V_{-i}\right)
    \xLeftrightarrow{\ \text{def}\ }
    \forall j
    \left(V_{+j}=\bigoplus_i\left(V_{-i}\cap V_{+j}\right)\right)
  \end{equation}
  extending (as the notation suggests: \Cref{re:obot.gen}) \Cref{th:clr}'s $\bigobot$. This in turn implies that $\Psi$ meets the hypotheses of \Cref{th:clr}, and that earlier result delivers the conclusion.   
\end{proof}

\begin{remark}\label{re:obot.gen}
  The relation $\bigobot$ of \Cref{eq:obot.gen} is symmetric, and the notation is meant as suggestive of the fact that $W\bigobot W'$ in the sense of \Cref{th:clr} if and only if $(W,W^{\perp})\bigobot (W',W'^{\perp})$ in the present sense.  
\end{remark}


The functorial version of \Cref{eq:tht.ev.intro} is as expected. 

\begin{definition}\label{def:evrs}
  The \emph{eversion} 
  natural (self-)transformation of an $\bF^{\bullet}_{\cdot}(V)$ functor is 
  \begin{equation}\label{eq:tht.ev}
    \bF_{\mu}^{\bullet}(V)
    \ni
    \left(V_i\right)_i
    \xmapsto{\quad\Theta_{ev}\quad}
    \left(V'_i\right)_i
    \in    
    \bF_{\mu}^{\bullet}(V)
    ,\quad
    V'_i:=\left(\bigoplus_{j\ne i}V_j\right)^{\perp}.
  \end{equation}
  As in \Cref{eq:tht.ev.intro}, $\Theta_{ev}=\id$ for $\bullet=\perp$.
\end{definition}

\begin{theorem}\label{th:pfr}
  For $n\in \bZ_{\ge 3}$ and $\Bbbk\in \{\bR,\bC\}$ the $\cat{Top}$-valued natural transformations $\bF_{\cdot}(\Bbbk^n)\xrightarrow{\Theta} \bF_{\cdot}(\bC^n)$ are precisely those falling into one of the following classes.
  \begin{enumerate}[(a),wide]
  \item $\Theta=\Theta_T$ as in \Cref{eq:tht.t} for linear or conjugate-linear injections $\Bbbk^n\lhook\joinrel\xrightarrow{T}\bC^n$;

  \item or of the form $\Theta_T\circ\Theta_{ev}$, with the latter factor as defined in \Cref{eq:tht.ev}. 
  \end{enumerate}
\end{theorem}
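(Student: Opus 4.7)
The plan is to leverage \Cref{th:pfrp} to pin down $\Theta$ on the perpendicular sub-functor, establish a pointwise dichotomy between identity and eversion behavior at each tuple, and globalize via connectedness. First I would restrict $\Theta$ to $\bF^{\perp}_{\cdot}(\Bbbk^n)$, obtaining a continuous natural transformation into $\bF_{\cdot}(\bC^n)$; the continuous part of \Cref{th:pfrp} then forces this restriction to be $\Theta_T$ for some linear or conjugate-linear injection $T$. Precomposing $\Theta$ with $\Theta_{T^{-1}}$ (available since $T$ is a bijection when $\Bbbk = \bC$, with the $\Bbbk = \bR$ case handled analogously after a suitable extension) brings us to the hypothesis $\Theta|_{\bF^{\perp}(\bC^n)} = \id$, leaving the task of showing $\Theta \in \{\id,\,\Theta_{ev}\}$.

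The central step is the pointwise dichotomy: for each $(\ell_i)_i \in \bF(\bC^n)$, $\Theta((\ell_i)) \in \{(\ell_i),\ \Theta_{ev}((\ell_i))\}$. Naturality along the refinement $(1^n) \preceq (1, n-1)$ renders the $i^{\mathrm{th}}$ line $\ell_i''$ of $\Theta((\ell_i))$ a continuous function $g(\ell_i, W_i)$ of the pair $(\ell_i,\ W_i := \bigoplus_{j \ne i}\ell_j)$ alone, with $g(\ell,\ell^{\perp}) = \ell$ on the orthogonal locus. For $(\ell, W)$ with $\ell \ne W^{\perp}$, I would set $U := \ell + W^{\perp}$ (a $2$-plane containing the mutually orthogonal lines $W^{\perp}$ and $\ell_2 := U \cap W$, since $W^{\perp} \perp W \supseteq \ell_2$) and extend $(\ell, \ell_2)$ by an orthonormal frame $e_3, \ldots, e_n$ of $U^{\perp} \subseteq W$. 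Applying $\pi$-linking for $\pi = \{\{1,2\}\} \cup \{\{j\}\}_{j \ge 3}$, tested against the orthogonal-in-$\bC^n$ representative of its class (taking $\ell_1, \ell_2$ orthogonal inside $U$), forces $\ell''_j = e_j$ for $j \ge 3$ and $\ell''_1 + \ell''_2 = U$, producing a continuous $S_2$-equivariant self-map $f_U \colon \bF(U) \to \bF(U)$ identity on orthogonal pairs of $U$. The crux is the $2$-dimensional sub-claim $f_U \in \{\id,\ \Theta_{ev}|_{\bF(U)}\}$; via the preserver-theoretic interpretation highlighted in the introduction (tuples as eigendecompositions of simple-spectrum operators, $\pi$-linking as commutativity, eversion as the exotic $\Ad_S N \mapsto \Ad_{S^{-1}}N$), this reduces to the classification of continuous commutativity-and-spectrum preservers that fix normal operators, per \cite[Proposition 2.8]{2501.06840v2} and \cite[Theorem 0.4]{2601.01208v1}. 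Either alternative yields $g(\ell, W) \in \{\ell,\ W^{\perp}\}$, using $\ell_2^{\perp} \cap U = W^{\perp}$ in the eversion case.

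For the global dichotomy, both $X := \{(\ell, W) \in \bF_{(1, n-1)}(\bC^n) : g(\ell, W) = \ell\}$ and $Y := \{(\ell, W) : g(\ell, W) = W^{\perp}\}$ are closed by continuity of $g$, together cover $\bF_{(1, n-1)}(\bC^n)$ by the pointwise dichotomy, and meet precisely along the orthogonal locus $\{(\ell, \ell^{\perp})\}$; since the complement of that positive-codimension closed submanifold in the connected manifold $\bF_{(1, n-1)}(\bC^n)$ remains connected, one of $X$, $Y$ must be the whole space. This delivers $\Theta_{(1^n)} \in \{\id,\ \Theta_{ev}\}$, and the identification propagates across all $\cY\cD$-objects by the natural-transformation structure. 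The hard part will be the $2$-dimensional sub-claim in the preceding paragraph: purely topological data on $f_U$ admits many candidate continuous $S_2$-equivariant identity-on-orthogonal self-maps of $\bF(\bC^2) \simeq (\bC\bP^1)^2 \setminus \Delta$, and the preserver-theoretic rigidity drawn from \cite{2501.06840v2, 2601.01208v1} is what pins $f_U$ down to the identity-versus-eversion alternative.
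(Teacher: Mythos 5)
Your overall architecture --- reduce via \Cref{th:pfrp} to the case $\Theta|_{\bF^{\perp}_{\cdot}}=\id$, establish a pointwise identity-vs.-eversion dichotomy, then globalize by connectedness --- is in the right spirit, and the globalization step (two closed sets covering $\bF_{(n-1,1)}$, intersecting in the codimension-$\ge 2$ orthogonal locus, forced by connectedness of the complement to have one of them trivial) is sound. The genuine gap is precisely where you flag it: the two-dimensional sub-claim $f_U\in\{\id,\Theta_{ev}|_{\bF(U)}\}$. By the time you have passed to $f_U$ you retain only that it is a continuous $S_2$-equivariant self-map of $\bF(\bC^2)\simeq(\bC\bP^1)^2\setminus\Delta$ fixing orthogonal pairs, and as you yourself observe that class is vast (e.g.\ slide each line toward the other along the Fubini--Study geodesic by an angle that depends symmetrically on the pair and vanishes on antipodes). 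The appeal to \cite[Proposition 2.8]{2501.06840v2} and \cite[Theorem 0.4]{2601.01208v1} does not close this: both rest on projective-geometric rigidity (the Fundamental Theorem of Projective Geometry, Meshulam--\v{S}emrl) that is simply absent in dimension $2$; that is exactly why \Cref{th:clr} and \Cref{th:pfrp} impose $n\ge 3$. Collapsing to a single $2$-plane $U$ discards precisely the higher-dimensional leverage that makes the problem rigid, and the freestanding hypotheses you have recorded on $f_U$ cannot restore it; you would at minimum need to extract extra compatibility constraints on the family $\{f_U\}$ across the $2$-planes of a common $3$-plane.

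The paper's proof never sheds the ambient dimension. Having normalized as you do to $\Theta|_{\bF^{\perp}_{\cdot}}=\id$, it fixes for each positive $S\in\GL_+(\Bbbk^n)$ the (conjugate-)linear $T_S$ inducing $\Theta|_{S\bF^{\perp}_{\cdot}}$ (from \Cref{th:pfrp} again, applied to the $S$-translated orthogonal locus) and studies the full-rank operator $\widehat S:=T_S S$. Functoriality forces $\widehat S$ to bicommute with $S$, and preservation of the generalized $\bigobot$-relation \Cref{eq:obot.gen} yields $\widehat S\ell\in\spn\{S\ell,S^{-1}\ell\}$ for all lines $\ell$; the Meshulam--\v{S}emrl theorem on locally linearly dependent operators \cite[Theorem 2.6]{MR1897909} then gives $\widehat S\in\spn\{S^{\pm1}\}$, the same for $\widehat S^{*-1}$, and an argument along the lines of \cite[Proposition 2.10]{2501.06840v2} pins $\widehat S\in\{S,S^{-1}\}$ up to scaling, i.e.\ $T_S\in\{\id,S^{-2}\}$, which is exactly $\Theta|_{S\bF^{\perp}_{\cdot}}\in\{\id,\Theta_{ev}\}$. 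A clopen argument over the connected $\GL_+$ then forces one alternative uniformly. In short: the paper replaces your per-$2$-plane local analysis by a per-$S$-orbit global one, and it is the operator-theoretic input (locally linearly dependent operators in dimension $n\ge 3$) that replaces the missing $2$D rigidity your route would require.
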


\begin{remark}\label{re:evers.order}
  The reader might naturally wonder at this stage whether one should not allow for more sophisticated compositions of $\Theta_T$ and $\Theta_{ev}$ in the statement of \Cref{th:pfr}: plainly, the natural transformations with the requisite properties compose well. The simple remark recorded as \Cref{le:evers.order} below elucidates the matter. 
\end{remark}

\begin{lemma}\label{le:evers.order}
  For $\Bbbk\in \left\{\bR,\bC\right\}$ a composition $\Theta_{ev}\circ\Theta_T$ for invertible (conjugate-)linear $T\circlearrowright \Bbbk^n$ is of the form $\Theta_{T}\circ \Theta_{ev}$ for an invertible (respectively conjugate-)linear $T'$. 
\end{lemma}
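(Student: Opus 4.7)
My plan is to reduce to the finest (all-lines) partition and exhibit the desired $T'$ as a Hilbert-space contragredient of $T$.

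First I would argue that it suffices to verify $\Theta_{ev}\circ\Theta_T=\Theta_{T'}\circ\Theta_{ev}$ on $\bF^{\bullet}_{(1,\ldots,1)}(\Bbbk^n)$. Both sides are natural self-transformations of $\bF^{\bullet}_{\cdot}(\Bbbk^n)$: $\Theta_T$ manifestly commutes with the refinement maps of \Cref{not:flg}\Cref{item:not:flg:rfnmt}, and $\Theta_{ev}$ is declared natural in \Cref{def:evrs}. Since the refinement map $\bF^{\bullet}_{(1,\ldots,1)}\to \bF^{\bullet}_{\mu}$ is surjective (split every $V_k$ into a basis, respectively orthonormal basis, of lines), naturality will then propagate the equality from the finest partition to every $\mu\vdash n$.

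Second, I would construct $T'$ by cases. For linear invertible $T$, set $T':=(T^*)^{-1}$ with $T^*$ the Hilbert-space adjoint; this is again linear invertible. For conjugate-linear invertible $T$, fix the coordinatewise conjugation $C$ on $\bC^n$, factor $T=LC$ with $L:=TC^{-1}$ linear invertible, and set $T':=(L^*)^{-1}C$, which is conjugate-linear invertible. In either case $T'$ matches the parity prescribed by the statement.

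Finally, I would verify the identity on line $n$-tuples. Evaluated on $(\ell_i)_i$, the $i$-th component of $\Theta_{ev}\circ\Theta_T$ is $\bigl(T\bigoplus_{j\ne i}\ell_j\bigr)^{\perp}$ and that of $\Theta_{T'}\circ\Theta_{ev}$ is $T'\bigl(\bigoplus_{j\ne i}\ell_j\bigr)^{\perp}$, so matters reduce to the subspace identity $(TU)^{\perp}=T'U^{\perp}$ for $U\le \Bbbk^n$. In the linear case this is the standard adjoint relation $(TU)^{\perp}=(T^*)^{-1}U^{\perp}$. In the conjugate-linear case, using $TU=L(CU)=L\overline{U}$ together with the direct calculation $\overline{U}^{\perp}=\overline{U^{\perp}}=CU^{\perp}$ (a consequence of $\langle w,\overline{u}\rangle=\overline{\langle \overline{w},u\rangle}$ for the Hermitian inner product), one chains $(TU)^{\perp}=(L^*)^{-1}\overline{U^{\perp}}=(L^*)^{-1}CU^{\perp}=T'U^{\perp}$.

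I do not anticipate a genuine obstacle: once the correct contragredient $T'$ is identified, the verification is formal linear/semilinear algebra. The only mild bookkeeping concern is the conjugate-linear branch, which I would normalize by fixing the decomposition $T=LC$ at the outset so that sign conventions for Hermitian conjugation remain transparent throughout.
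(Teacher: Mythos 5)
Your argument is correct, and it pins down the same operator: $T' = (T^*)^{-1}$, which in the paper's polar-decomposition notation $T = UP$ is exactly $UP^{-1}$. The route is different in packaging, though. The paper factors through the polar decomposition and quotes, essentially without comment, the two observations that eversion commutes with unitaries ($\Theta_{ev}\circ\Theta_U = \Theta_U\circ\Theta_{ev}$) and inverts positives ($\Theta_{ev}\circ\Theta_P = \Theta_{P^{-1}}\circ\Theta_{ev}$). You instead establish the single unified subspace identity $(TU)^{\perp} = (T^*)^{-1}U^{\perp}$ for arbitrary invertible (semi)linear $T$, from which those two facts drop out as the special cases $(U^*)^{-1}=U$ (unitary) and $(P^*)^{-1}=P^{-1}$ (positive). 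Your version is more self-contained and makes the conjugate-linear branch transparent via the factoring $T=LC$. One minor inefficiency: the reduction to the finest partition $(1,\ldots,1)$ via naturality and surjectivity of the refinement maps is sound but superfluous --- the same identity applied to $U=\bigoplus_{j\ne i}V_j$ settles every $\mu\vdash n$ directly, without detouring through lines.
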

\begin{proof}
  We focus on the linear complex case to fix ideas; little would change in addressing the other branches. The observation follows from
  \begin{equation*}
    \begin{aligned}
      \Theta_{ev}\circ\Theta_T
      &\xlongequal[\quad\substack{\text{$U$ unitary}\\\text{$P$ positive}\\\text{\emph{polar decomposition} \cite[\S I.5.2.2]{blk}}}\quad]{\quad T=UP\quad}
      \Theta_U\circ \Theta_{ev}\circ\Theta_P\\
      &=
        \Theta_U\circ \Theta_{P^{-1}}\circ\Theta_{ev}
        =
        \Theta_{UP^{-1}}\circ \Theta_{ev}
    \end{aligned}
  \end{equation*}
  upon setting $T':= UP^{-1}$. 
\end{proof}

\pf{th:pfr}
\begin{th:pfr}
  We know from \Cref{th:pfrp}\Cref{item:th:pfrp:cont} that for every $S\in \GL(\Bbbk^n)$ the restriction $\Theta|_{S\bF^{\perp}_{\cdot}}$ is of the form $\Theta_{T_S}$ for some linear or conjugate-linear invertible $T_S\circlearrowright \bC^n$ (defined uniquely only up to scaling, which ambiguity will however make no material difference). Composing with $\Theta^{-1}_{T_{\id}}$, we can (and will, throughout the rest of the proof) assume that $\Theta|_{\bF^{\perp}_{\cdot}}$ is induced by the standard embedding $\Bbbk^n\le \bC^n$. Henceforth conflating, by a slight abuse, that standard embedding with an identity, the goal is to show that $\Theta\in \left\{\id,\Theta_{ev} \right\}$. Equivalently:
  \begin{equation}\label{eq:all.or.all}
    \forall \left(S\in \GL(\Bbbk^n)\right)
    \left(\Theta|_{S\bF^{\perp}_{\cdot}}=\id\right)
    \quad
    \vee
    \quad
    \forall\left(S\in \GL(\Bbbk^n)\right)
    \left(\Theta|_{S\bF^{\perp}_{\cdot}}=\Theta_{ev}\right)
  \end{equation}

  A number of moves will gradually simplify the problem.

  \begin{enumerate}[(I),wide]


  \item\textbf{: Reduction to positive $S$.} That is, if and when convenient, we can assume $S$ to be a \emph{positive} operator in the usual sense \cite[\S I.4.2]{blk} of being of the form $S=X^*X$. Indeed:
    \begin{equation*}
      \GL_+(\Bbbk^n)
      \bF_{\cdot}^{\perp}
      =\GL(\Bbbk^n) \bF_{\cdot}^{\perp},
      \quad
      \GL_+(\Bbbk^n)
      :=
      \left\{S\in \GL(\Bbbk^n)\ :\ S\ge 0\right\}.
    \end{equation*}
    
  \item\textbf{: Quantifier/connective flip.} That is, we can substitute for \Cref{eq:all.or.all} the formally weaker
    \begin{equation*}
      \forall\left(S\in \GL_+\right)      
      \left(
        \Theta|_{S\bF^{\perp}_{\cdot}}=\id
        \ 
        \vee
        \ 
        \Theta|_{S\bF^{\perp}_{\cdot}}=\Theta_{ev}
      \right).
    \end{equation*}
    This is again a connectedness argument: the two sets
    \begin{equation}\label{eq:all.or}
      \left\{
        S\in \GL_+(\Bbbk^n)
        \ :\
        \Theta|_{S\bF^{\perp}_{\cdot}}
        =
        \left[
          \begin{aligned}
            &\id\\
            &\Theta_{ev}
          \end{aligned}
        \right.
      \right\},
    \end{equation}
    partitioning the connected $\GL_+$ if \Cref{eq:all.or} holds, are both closed; one is thus empty as soon as the other is not.

  \item\textbf{: It suffices to prove the union of the two sets \Cref{eq:all.or} open.} For indeed, that union would then be clopen and non-empty (as $S:=\id$ belongs to it), hence all of $\GL_+$ by connectedness.
    
  \item\textbf{: Taking stock.} The problem has now been reduced to 
    \begin{equation*}
      \forall\left(S\in U\right)      
      \left(
        \Theta|_{S\bF^{\perp}_{\cdot}}=\id
        \ 
        \vee
        \ 
        \Theta|_{S\bF^{\perp}_{\cdot}}=\Theta_{ev}
      \right).
    \end{equation*}
    for some neighborhood $U=\overset{\circ}{U}\ni S_0$ in $\GL_+$ of any $S_0\in\bigcup\text{\Cref{eq:all.or}}$. Nothing substantive is lost by taking $S_0=\id$ to fix ideas, so the goal henceforth will be to argue that for some sufficiently small $\id$-neighborhood $U=\overset{\circ}{U}\subseteq \GL_+$ we have
    \begin{equation*}
      \forall \left(S\in U\right)
      \left(\Theta|_{S\bF^{\perp}_{\cdot}}=\id\right)
      \quad
      \vee
      \quad
      \forall\left(S\in U\right)
      \left(\Theta|_{S\bF^{\perp}_{\cdot}}=\Theta_{ev}\right).
    \end{equation*}        

  \item\label{item:th:pfr:pf.bicom}\textbf{: $S$ and $\wh{S}:=T_S S$ bicommute.} The phrase borrows from operator-algebraic parlance \cite[\S I.2.5.3]{blk}, and here means $\wh{S}$ commutes with all (orthogonal) projections commuting with $S$. To that end, observe that 
    \begin{equation*}
      \forall \left((V_i)\in \bF_{\cdot}^{\perp}(\Bbbk^n)\right)
      \bigg(
      \left(SV_i\right) = (V_i)
      \xRightarrow{\quad}
      \left(\wh{S}V_i\right) = \left(SV_i\right) = (V_i)
      \bigg);
    \end{equation*}
    this precisely translates the target claim.
    
  \item\textbf{: Proof proper.} The binary relation $\bigobot$ of \Cref{eq:obot.gen} is again preserved by $\Theta$, so in particular
    \begin{equation*}
      \forall \left((V_i),\ (W_i)\in \bF_{\cdot}^{\perp}(\Bbbk^n)\right)
      \bigg(
      (V_i)\bigobot \left(SW_i\right)
      \quad
      \xRightarrow{\quad}
      \quad
      (V_i)\bigobot \left(\wh{S} W_i\right)
      \bigg).
    \end{equation*}    
    Apply this remark to the following setup:
    \begin{itemize}[wide]
    \item $(W_i)=\left(\ell^{\perp},\ell\right)\in \bF_{(n-1,1)}^{\perp}$ for varying lines $\ell\le \Bbbk^n$;

    \item and
      \begin{equation*}
        (V_i)
        :=
        \left(V^{\perp},\ V:=\spn\left\{S\ell,\ \left(S\ell^{\perp}\right)^{\perp}=S^{-1}\ell\right\}\right)
        \in
        \bF_{(n-2,2)}^{\perp}
        \text{ or }
        \bF_{(n-1,1)}^{\perp}.
      \end{equation*}
    \end{itemize}
    The latter option $\bF_{(n-1,1)}^{\perp}$ occurs only exceptionally, when $S\ell$ and $S^{-1}\ell$ happen to coincide; generically (which is the case we can focus on by density) $(V_i)\in \bF_{(n-2,2)}^{\perp}$. 
    
    For $S$ sufficiently close to $\id$ the operator $\wh{S}\sim \id\sim  S$ itself is linear (rather than conjugate-linear) and the line $\wh{S} \ell$, itself being close to $S\ell$ and hence $\ell$, cannot be contained in $V^{\perp}$. Consequently, for $S$ ranging over an open dense subset of an $\id$-neighborhood in $U=\overset{\circ}{U}\subseteq \GL_+(\Bbbk^n)$,
    \begin{equation*}
      \forall \ell
      \left(\wh{S} \ell\in \spn\left\{S{\ell},S^{-1}\ell\right\}\right)
      \xRightarrow[\quad]{\quad\text{\Cref{item:th:pfr:pf.bicom} and \cite[Theorem 2.6]{MR1897909}}\quad}
      \wh{S}\in \spn\left\{S^{\pm 1}\right\}.
    \end{equation*}
    The hypothesis holds also for the line
    \begin{equation*}
      \left(\wh{S} \ell^{\perp}\right)^{\perp}
      =
      \wh{S}^{*-1}\ell
    \end{equation*}
    (the last exponent denoting the inverse adjoint), so that
    \begin{equation*}
      \forall\left(S\in U\right)
      \left(\wh{S},\wh{S}^{*-1}\in \spn\left\{S^{\pm 1}\right\}\right).
  \end{equation*}
  This is sufficient to conclude that $\wh{S}\in \left\{S^{\pm 1}\right\}$ up to scaling: this is effectively what the argument yielding \cite[Proposition 2.10]{2501.06840v2} proves. One of the options $\wh{S}\in \bC S^{\pm 1}$ obtains throughout a dense subset of $U$ and hence over $U$ generally, yielding the conclusion.  \qedhere
  \end{enumerate}
\end{th:pfr}


\addcontentsline{toc}{section}{References}

\def\polhk#1{\setbox0=\hbox{#1}{\ooalign{\hidewidth
  \lower1.5ex\hbox{`}\hidewidth\crcr\unhbox0}}}

\Addresses

\end{document}